\newcommand{\B}{\mathcal{B} }
\newcommand{\C}{\mathcal{C} }
\newcommand{\D}{\mathcal{D} }
\newcommand{\T}{\mathcal{T} }
\newcommand{\U}{\mathcal{U} }
\newcommand{\m}{\mathfrak{m} }
\newcommand{\M}{\mathfrak{M} }
\newcommand{\R}{\mathcal{R} }
\newcommand{\F}{\mathcal{F} }
\newcommand{\Sc}{\mathcal{S} }
\newcommand{\Z}{\mathbb{Z} }
\newcommand{\rt}{\rightarrow}
\newcommand{\depth}{\operatorname{depth}}
\newcommand{\height}{\operatorname{height}}
\newcommand{\Proj}{\operatorname{Proj}}
\newcommand{\Ass}{\operatorname{Ass}}
\theoremstyle{plain}
\newtheorem{theorem}{Theorem}[section]
\newtheorem{lemma}[theorem]{Lemma}
\newtheorem{proposition}[theorem]{Proposition}
\theoremstyle{definition}
\newtheorem{definition}[theorem]{Definition}
\newtheorem{remark}[theorem]{Remark}
\theoremstyle{remark}
\begin{document}

\title[$R_1$ property]{First Coefficient ideals and $R_1$ property of Rees algebras }
\author{Tony~J.~Puthenpurakal}
\date{\today}
\address{Department of Mathematics, IIT Bombay, Powai, Mumbai 400 076}

\email{tputhen@math.iitb.ac.in}
\subjclass{Primary 13A30, 13B22 ; Secondary 13D40, 13H10 }
\keywords{Rees algebra, extended Rees algebra, integral closure of ideals, coefficient ideals, $R_1$ property}

 \begin{abstract}
Let $(A,\m)$ be an excellent normal local ring of dimension $d \geq 2$ with infinite residue field. Let $I$ be an $\m$-primary ideal. Then the following assertions are equivalent:
\begin{enumerate}[\rm (i)]
  \item The extended Rees algebra $A[It, t^{-1}]$ is $R_1$.
  \item The  Rees algebra $A[It]$ is $R_1$.
  \item $\Proj(A[It])$ is $R_1$.
  \item $(I^n)^* = (I^n)_1$ for all $n \geq 1$.
\end{enumerate}
Here $(I^n)^*$ is the integral closure of $I^n$ and $(I^n)_1$ is the first coefficient ideal of $I^n$.
\end{abstract}
 \maketitle
\section{introduction}
Let $(A.\m)$ be an excellent Noetherian local domain of dimension $d$ and let $I$ be an $\m$-primary ideal. For convenience we assume that the residue field of $A$ is infinite and that $d \geq 2$.
Throughout $\R = A[It, t^{-1}]$ is the extended Rees algebra of $I$ and $\Sc = A[It]$ is the Rees algebra of $I$.  A natural question is when is $\R$ (or $\Sc$) normal, $S_2$ and $R_1$.
If $A$ is also normal then $\R$ and $\Sc$ is normal if and only if $I^n = (I^n)^*$ for all $n \geq 1$ (here $(I^n)^*$ is the integral closure of $I^n$). If $A$ is $S_2$ and quasi-unmixed then by \cite[2.4, 2.5. 2.6]{C},
$\R$ (and $\Sc$) is $S_2$ if and only if $I^n = (I^n)_1$ for all $n \geq 1$ (here $(I^n)_1$ is the first coefficient ideal of $I^n$, see \ref{coeff}). An ideal theoretic characterization of when $\R$ (and $\Sc$) is $R_1$ is not dealt with in the literature. In this short article we address this issue.

We prove:
\begin{theorem}\label{main}
Let $(A,\m)$ be an excellent normal local ring of dimension $d \geq 2$ with infinite residue field. Let $I$ be an $\m$-primary ideal. Then the following assertions are equivalent:
\begin{enumerate}[\rm (i)]
  \item The extended Rees algebra $\R$ is $R_1$.
  \item The  Rees algebra $\Sc$ is $R_1$.
  \item $\Proj(\Sc)$ is $R_1$.
  \item $(I^n)^* = (I^n)_1$ for all $n \geq 1$.
\end{enumerate}
\end{theorem}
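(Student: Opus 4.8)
The plan is to compare each algebra with its normalization through its $S_2$-ification, using Ciupercă's description \cite{C} of the latter. Set
\[
\T := \bigoplus_{n \ge 0} (I^n)_1\, t^n, \qquad \U := \bigoplus_{n \in \Z} (I^n)_1\, t^n ,
\]
with the convention $(I^n)_1 = A$ for $n \le 0$; by \cite{C} (and the analogous statement for the extended Rees algebra), $\T$ is the $S_2$-ification of $\Sc$ and $\U$ is the $S_2$-ification of $\R$. Since $A$ is an excellent normal domain, the normalizations are $\ov{\Sc} = \bigoplus_{n \ge 0}(I^n)^*\, t^n$ and $\ov{\R} = \bigoplus_{n \in \Z}(I^n)^*\, t^n$, and (as $(I^0)_1 = (I^0)^* = A$) condition (iv) is exactly the assertion $\T = \ov{\Sc}$, equivalently $\U = \ov{\R}$.

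First I would isolate the following general principle: \emph{if $B$ is an excellent Noetherian domain of dimension $\ge 2$ with $S_2$-ification $B \hookrightarrow B'$, then $B$ is $R_1$ if and only if $B' = \ov{B}$.} Indeed $B'$ is module-finite and birational over $B$, and since $B$, being a domain, satisfies $(S_2)$ in codimension $\le 1$, the map $B \to B'$ is an isomorphism at every prime of height $\le 1$; as $B$ is universally catenary this forces a height-preserving bijection between the height-one primes of $B$ and those of $B'$ under which the local rings coincide. Hence $B$ is $R_1$ $\iff$ $B'$ is $R_1$; and because $B'$ is $(S_2)$, this holds $\iff$ $B'$ is normal $\iff$ $B' = \ov{B'} = \ov{B}$. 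Taking $(B,B') = (\Sc,\T)$ yields (ii) $\iff$ (iv), and $(B,B') = (\R,\U)$ yields (i) $\iff$ (iv).

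For (ii) $\iff$ (iii) I would argue directly on the cone. Because $\Sc$ is a standard graded $A$-algebra, $\Sc$ is $R_1$ iff $\Sc_{\mathfrak{P}}$ is regular for every homogeneous prime $\mathfrak{P}$ of height $\le 1$ (a non-homogeneous prime $\mathfrak{Q}$ of height $\le 1$ has $\mathfrak{Q}^{*}$ of height $\le 1$, and $\Sc_{\mathfrak{Q}}$ is a localization of $\Sc_{\mathfrak{Q}^{*}}$). Writing $\Sc_+$ for the irrelevant ideal, the identity $\dim \Sc/(\mathfrak{p}\Sc+\Sc_+) = \dim A/\mathfrak{p}$ shows that the only homogeneous prime of height $\le 1$ containing $\Sc_+$ is $\Sc_+$ itself, and that $\Sc_{\Sc_+}$ is the localization of $\Sc \otimes_A \operatorname{Frac}(A) = \operatorname{Frac}(A)[t]$ at $(t)$, a discrete valuation ring---so it imposes no condition. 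Any other homogeneous prime $\mathfrak{P}$ of height $1$ corresponds to a codimension-one point $y$ of $\Proj(\Sc)$, and choosing $f \in It \setminus \mathfrak{P}$ exhibits $\Sc_{\mathfrak{P}}$ as a localization of $\mathcal{O}_{\Proj(\Sc),y}[f,f^{-1}]$, faithfully flat over $\mathcal{O}_{\Proj(\Sc),y}$ with field fiber; thus $\Sc_{\mathfrak{P}}$ is regular iff $\mathcal{O}_{\Proj(\Sc),y}$ is. Letting $\mathfrak{P}$ vary, $y$ runs over all codimension-one points of $\Proj(\Sc)$, so $\Sc$ is $R_1$ iff $\Proj(\Sc)$ is $R_1$.

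I expect the main obstacle to be the general principle in the second paragraph---specifically, verifying rigorously that passing to the $S_2$-ification neither creates nor destroys height-one primes and that the corresponding localizations agree; this is exactly where excellence (finiteness of the normalization and of the $S_2$-ification) and universal catenarity are needed, and where the explicit form of the $S_2$-ification from \cite{C} converts ``regular in codimension one'' into the ideal-theoretic equality (iv). By contrast, (ii) $\iff$ (iii) is essentially formal once one notices the perhaps unexpected fact that the homogeneous prime $\Sc_+$ lying over the vertex always localizes to a discrete valuation ring.
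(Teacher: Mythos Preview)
Your argument is correct and takes a genuinely different route from the paper. The paper never isolates your general $S_2$-ification principle; instead it introduces $\B=\bigoplus_{n\ge1}(I^n)^*/I^n$ and shows directly that each of $\R$, $\Sc$, $\Proj\Sc$ is $R_1$ iff $\dim_\Sc\B\le d-1$. Linking this dimension bound to (iv) then requires two technical lemmas proved from scratch: first, that $\C=\bigoplus_{n\ge1}(I^n)^*/(I^n)_1$ is either $0$ or of dimension exactly $d$ (by comparing $\Ass\,\C$ with the associated primes of the associated graded ring of the filtration $\{(I^n)_1\}$, which is shown to be $S_1$ and equidimensional because the $S_2$-ification of $\R$ is a catenary $S_2$-domain); second, that $\D=\bigoplus_{n\ge1}(I^n)_1/I^n$ has dimension $\le d-1$ (via a Veronese reduction to the fact that $e_0,e_1$ agree for $I$ and $I_1$). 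Your approach bypasses both lemmas entirely: once $B\to B'$ is an isomorphism in codimension one, Serre's criterion does the rest. This is cleaner and more conceptual; what the paper's approach buys in exchange is the additional equivalent condition $\dim\B\le d-1$, which is of independent interest.

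Two small points to tighten. Your parenthetical ``$\Sc_{\mathfrak Q}$ is a localization of $\Sc_{\mathfrak Q^*}$'' is backwards (since $\mathfrak Q^*\subseteq\mathfrak Q$); the reduction to homogeneous primes is nonetheless valid here because a non-graded height-one prime of the domain $\Sc$ meets $A$ in $0$, so its localization lies inside $K[t]$ and is automatically regular. And Serre's criterion needs $B'$ to be $S_2$ \emph{as a ring}, whereas the definition you are quoting gives $S_2$ as a $B$-module; the two notions coincide for a finite birational extension over a universally catenary base by the dimension formula, but this deserves a sentence.
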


The main contribution of this paper is to guess the result. The proofs are not particularly hard.
Here is an overview of the contents of this paper. In section two we discuss a few preliminaries that we need. In section three we prove two technical results that we need. Finally in section four we prove Theorem \ref{main}.
\section{Preliminaries}
In this section we discuss a few preliminaries that we need. In this paper all rings considered are Noetherian and all modules considered (unless otherwise stated) is finitely generated.
For all undefined terms see \cite{BH}.

\s Let $A$ be a  ring and let $M$ be an  $A$-module. We say that $M$ satisfies Serre’s $S_2$ property if for every prime ideal $P$ of $A$
\[
 \depth M_P \geq  \inf\{2,\dim M_P\}.
 \]
 We say that the ring $A$ satisfies $S_2$ if it satisfies $S_2$ as an $A$-module.

\begin{definition}
Let $A$ be a  domain with quotient field $Q(A)$. We say that a domain $B$ is an $S_2$-ification of A if:
\begin{enumerate}
\item
$A \subseteq B \subseteq Q(A)$ and $B$ is module-finite over $A$,
\item
$B$ is $S_2$ as an $A$-module, and
\item
 for all $b$ in $B \setminus A$, $\height D(b) \geq 2$ where $D(b) = \{ a \in A \mid ab \in A \}$.
\end{enumerate}
\end{definition}
\begin{remark}
In general, the $S_2$-ification of a domain might not exist, but if there is one, then it must be unique; see \cite[2.4]{HH}.
\end{remark}

\s \emph{Coefficient ideals:} Let $(A,\m)$ be a local ring and let $I$ be an $\m$-primary ideal. For sufficiently large values of $n$, $\ell(A/I^{n+1})$ is a polynomial $P_I(n)$ in $n$ of degree $d = \dim A$, the Hilbert polynomial of $I$. We write this polynomial in terms of binomial coefficients:
\[
P_I(n) = e_0(I)\binom{n+d}{d} - e_1(I)\binom{n+d -1 }{d -1} + \cdots + (-1)^de_d(I).
\]
The coefficients $e_i(I)$ are integers and we call them the Hilbert coefficients of $I$. In \cite[Theorem 1]{S} the following theorem is proved.
\begin{theorem}\label{coeff}
Let $(A,\m)$ be a quasi-unmixed local ring of dimension $d > 0$ with infinite residue field and $I$ an $\m$-primary ideal. Then for each integer $k$ in $\{0,1,\cdots,d \}$ there exists a unique largest ideal $I_{k}$ containing $I$ such that $e_i(I) = e_i(I_{k})$ for $i = 0,1,\cdots,k$. The ideal $I_{k}$ is called the $k$-th coefficient ideal of $I$.
\end{theorem}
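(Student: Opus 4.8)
The plan is to realize the prescribed family of ideals as one that is bounded above and closed under addition, so that a unique largest element exists by Noetherianity. Fix $k \in \{0,\dots,d\}$ and let $\F_k$ be the set of ideals $J$ with $I \sub J$ and $e_i(J) = e_i(I)$ for $i = 0,\dots,k$; it is nonempty, since $I \in \F_k$. The first step is to translate the coefficient condition into a growth (hence dimension) condition. Because $I \sub J$ are both $\m$-primary, for $n \gg 0$ one has
\[
\ell(J^{n+1}/I^{n+1}) = P_I(n) - P_J(n) = \sum_{i=0}^{d}(-1)^i\bigl(e_i(I)-e_i(J)\bigr)\binom{n+d-i}{d-i},
\]
and reading off the degrees of the surviving binomial terms shows that $J \in \F_k$ if and only if the polynomial $\ell(J^n/I^n)$ has degree at most $d-k-1$ in $n$.

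Next I would bound $\F_k$ and set up the right modules. Since $k \geq 0$, any $J \in \F_k$ satisfies $e_0(J) = e_0(I)$, so by Rees's theorem for quasi-unmixed (formally equidimensional) local rings the integral closures agree, $\overline{J} = \overline{I}$, whence $J \sub \overline{I}$. In particular $I$ is a reduction of $J$, so the Rees module $\T_J := \bigoplus_{n \geq 0} J^n$ is a finitely generated graded module over $\T_I = \Sc = A[It]$, sitting inside $\T_{\overline{I}}$. The quotient $\T_J/\Sc = \bigoplus_n J^n/I^n$ has finite-length graded components, so its Hilbert function agrees for $n \gg 0$ with the polynomial $\ell(J^n/I^n)$, of degree $\dim_{\Sc}(\T_J/\Sc)-1$ (see \cite{BH}). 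Combining with the previous paragraph, $J \in \F_k$ if and only if $\dim_{\Sc}(\T_J/\Sc) \leq d-k$.

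The main step is closure of $\F_k$ under sums: given $J_1,J_2 \in \F_k$, put $K = J_1 + J_2$. Working inside $\T_{\overline{I}}$, one computes in each degree $(\T_{J_1}\T_{J_2})_n = \sum_{a+b=n} J_1^a J_2^b = K^n$, so $\T_K = \T_{J_1}\cdot\T_{J_2}$ as graded $\Sc$-submodules of $\T_{\overline{I}}$. I would then show $\operatorname{Supp}(\T_K/\Sc) \sub \operatorname{Supp}(\T_{J_1}/\Sc) \cup \operatorname{Supp}(\T_{J_2}/\Sc)$: if a prime $P$ of $\Sc$ lies outside both supports, then $(\T_{J_1})_P = (\T_{J_2})_P = \Sc_P$, and since localization commutes with the product of submodules, $(\T_K)_P = (\T_{J_1})_P(\T_{J_2})_P = \Sc_P\cdot\Sc_P = \Sc_P$, so $P \notin \operatorname{Supp}(\T_K/\Sc)$. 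Hence $\dim_{\Sc}(\T_K/\Sc) \leq \max_i \dim_{\Sc}(\T_{J_i}/\Sc) \leq d-k$, that is, $K \in \F_k$. With closure under sums established, the conclusion is routine: the ideal $I_k := \sum_{J \in \F_k} J$ is finitely generated, hence equals a finite sum $J_1 + \cdots + J_m$ of members of $\F_k$, so $I_k \in \F_k$ by induction on $m$; it contains every member of $\F_k$ and is therefore the unique largest such ideal.

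I expect the crux to be the closure under sums, and specifically its reduction to a statement about the dimensions of the Rees modules $\T_J/\Sc$ via the support inclusion for the product $\T_K = \T_{J_1}\T_{J_2}$. By contrast, the translation between Hilbert coefficients and dimension, and the boundedness of $\F_k$ inside $\overline{I}$, are comparatively formal once Rees's theorem is invoked; this is the one place where the quasi-unmixed hypothesis is essential.
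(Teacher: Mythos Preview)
The paper does not supply its own proof of this statement; it is quoted as a preliminary result from Shah \cite[Theorem~1]{S}. Your argument is correct and is, in outline, Shah's original proof: translate the condition $e_i(I)=e_i(J)$ for $i\le k$ into the dimension bound $\dim_{\Sc}(\T_J/\Sc)\le d-k$, and then show that $\F_k$ is closed under sums via the support inclusion $\operatorname{Supp}(\T_{J_1}\T_{J_2}/\Sc)\subseteq\operatorname{Supp}(\T_{J_1}/\Sc)\cup\operatorname{Supp}(\T_{J_2}/\Sc)$.

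One small point worth making explicit: to invoke the Hilbert polynomial/dimension relation from \cite{BH} you need the base ring in degree zero to be Artinian, whereas $\Sc_0=A$ is not. The fix is the observation that $\T_J/\Sc$ is a finitely generated $\Sc$-module whose homogeneous generators are each killed by a power of $\m$, hence the whole module is annihilated by some $\m^R$; you may then regard it as a module over $\Sc/\m^R\Sc$, whose degree-zero part $A/\m^R$ is Artinian, without changing its Krull dimension. Also note that your argument never uses the infinite residue field hypothesis; indeed, for the bare existence and uniqueness of $I_k$ only the quasi-unmixed hypothesis (via Rees's multiplicity theorem) is needed, exactly as you point out.
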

$I_{0}$ is the integral closure of $I$ and if $I$ contains a regular element, then $I_{d}$ is the Ratliff-Rush closure of $I$.

\s \emph{$S_2$-fication of the extended Rees algebra:} We will need the following result
\begin{theorem}\label{cc}[\cite[2.4, 2.5. 2.6]{C} ]
Let $(A, \m)$ be a quasi-unmixed, analytically un-ramified local domain with infinite residue field and  dimension $d \geq 2$. Assume $A$ satisfies $S_2$ as a ring. Let $\R = A[It, t^{-1}]$ be the extended Rees ring of $A$. Then the $S_2$-fication of $\R$ is
$\bigoplus_{n \in \Z} I_n$ where $I_n = A$ for $n \leq 0$ and $I_n = (I^n)_1$ for $n \geq 1$.
\end{theorem}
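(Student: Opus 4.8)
The plan is to take $B$ to be the $S_2$-ification of $\R$ and to compute its graded pieces one by one. Under the stated hypotheses $\R$ is analytically unramified and quasi-unmixed, so $B$ exists and is module-finite over $\R$; realizing $B$ as the endomorphism ring of the graded canonical module (or using that the defining conditions are homogeneous) shows that $B$ inherits the $\Z$-grading, say $B = \bigoplus_{n\in\Z} B_n t^n$ with $I^n \subseteq B_n$. Because $A$ is $S_2$, so is $A[t,t^{-1}] = \R_{t^{-1}}$, and since forming the $S_2$-ification commutes with inverting the nonzerodivisor $t^{-1}$ we get $B_{t^{-1}} = \R_{t^{-1}} = A[t,t^{-1}]$; hence $B_n \subseteq A$, with $B_n = A$ for $n\le 0$. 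Finally $B$ sits inside the module-finite normalization $\overline{\R} = \bigoplus_n \overline{I^n}\,t^n$, so $I^n \subseteq B_n \subseteq \overline{I^n}$ for $n\ge 1$. The whole problem is thus to identify the ideal $B_n$ with $(I^n)_1$.

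The first inclusion $B_n \subseteq (I^n)_1$ I expect to obtain cleanly from a Hilbert-coefficient comparison. Condition (3) in the definition of the $S_2$-ification forces every element of $B/\R$ to have conductor of height $\ge 2$, so $\dim_\R(B/\R) \le \dim\R - 2 = d-1$; since $B/\R$ is concentrated in positive degrees with finite-length graded pieces, $n\mapsto \ell(B_n/I^n)$ agrees for $n\gg 0$ with a polynomial of degree $\le d-2$. Consequently the Hilbert--Samuel polynomials of the filtrations $\{I^n\}$ and $\{B_n\}$ share their two top coefficients, i.e.\ $e_0$ and $e_1$ agree. To descend to a single ideal, fix $n$ and use $I^{nk} \subseteq (B_n)^k \subseteq B_{nk} \subseteq \overline{I^{nk}}$, which gives
\[
\ell(A/B_{nk}) \le \ell(A/(B_n)^k) \le \ell(A/I^{nk})
\]
for all $k$; the two outer polynomials in $k$ differ by a polynomial of degree $\le d-2$, so the Hilbert--Samuel polynomial of the $\m$-primary ideal $B_n$ has the same degree-$d$ and degree-$(d-1)$ terms, that is $e_0(B_n) = e_0(I^n)$ and $e_1(B_n) = e_1(I^n)$. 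Since $I^n \subseteq B_n$, Theorem \ref{coeff} applied to $I^n$ yields $B_n \subseteq (I^n)_1$.

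For the reverse inclusion I would write $B$ as the reflexive intersection $B = \bigcap_{\height P = 1} \R_P$ (legitimate for the domain $\R$ once the $S_2$-ification is module-finite) and verify $(I^n)_1 t^n \in \R_P$ for every height-one prime $P$. When $t^{-1}\notin P$ this is automatic, since $(I^n)_1 t^n \in A[t,t^{-1}] = \R_{t^{-1}} \subseteq \R_P$. The height-one primes containing $t^{-1}$ correspond to the Rees valuations of $I$, and there the membership reduces to a valuation estimate for $(I^n)_1$, equivalently to the assertion that the coefficient-ideal filtration is already saturated in codimension one and hence defines an $S_2$ ring. \emph{This codimension-one / valuative estimate for $(I^n)_1$ is the main obstacle}; granting it we get $(I^n)_1 \subseteq B_n$, and combined with the previous paragraph $B_n = (I^n)_1$ for all $n$, which is the claim. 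An alternative to the valuative route is to check that $T := \bigoplus_n (I^n)_1 t^n$ satisfies the three defining properties of the $S_2$-ification and then invoke its uniqueness; module-finiteness and the height-two conductor condition follow from the degree bound above, but the $S_2$-property of $T$ is exactly the same codimension-one difficulty, so the crux is unchanged.
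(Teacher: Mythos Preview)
The paper does not give its own proof of this statement: Theorem~\ref{cc} is quoted verbatim from Ciuperc\u a's paper \cite[2.4--2.6]{C} as a preliminary input, with no argument supplied here. So there is nothing in the present paper to compare your sketch against.

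On the substance of your outline: the first half (existence and grading of the $S_2$-ification $B$, the identification $B_n=A$ for $n\le 0$ via inverting $t^{-1}$, and the inclusion $B_n\subseteq (I^n)_1$ by a Hilbert-coefficient squeeze) is sound and is indeed how one reduces the problem to a single direction. Your self-diagnosed obstacle in the reverse inclusion is genuine and is precisely the content of Ciuperc\u a's theorem: one must show that the ring $T=\bigoplus_n (I^n)_1 t^n$ is $S_2$, equivalently that $(I^n)_1 t^n$ already lies in every height-one localization $\R_P$. You correctly isolate the nontrivial case $t^{-1}\in P$, but you do not supply the missing ingredient. In \cite{C} this step is carried out using Shah's structural description of the first coefficient ideal (essentially $(I)_1=\bigcup_n(I^{n+1}:x^n)$ for a superficial element $x$, extended to a minimal reduction), which converts the valuative condition at the primes over $t^{-1}$ into an asymptotic colon computation. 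Without importing that characterization, your sketch remains a reduction to the hard part rather than a proof.
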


\s\label{ver} Let $B = \bigoplus_{n \geq 0} B_n$ be a standard graded algebra over $B_0$. We assume $B_0$ is local. Let $M = \bigoplus_{n \geq 1}M_n$ be a finite $B$-module with $\ell(M_n) $ finite for all $n \geq 1$. Set $M_{\geq m} = \bigoplus_{n \geq m}M_n$. We note that for all $m \gg 0$ (say $m \geq r$) the $B$-module $M_{\geq m}$ is generated by $M_m$. Let $B^{(c)}$ be the Vernese subring of $B$ (for $c \geq 1$). Let $M^{(c)} = \bigoplus_{n \geq 1}M_{cn}$ be the Veronese submodule of $M$. The following result is
definitely known. We give a proof due to lack of a reference.
\begin{lemma}\label{v-lem}(with setup as in \ref{ver}) Let $c \geq r$. Then
$$\dim M \leq \max\{\dim M^{(c)}, 0 \}.$$
\end{lemma}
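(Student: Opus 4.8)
The plan is to exploit that $B$ is a finite module over its $c$-th Veronese subring $B^{(c)}$, reduce the computation of $\dim M$ to a computation over $B^{(c)}$, and then analyze $M$ one residue class mod $c$ at a time.

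First I would record the elementary multiplicative facts. Since $B$ is standard graded over $B_0$, one has $B_{cn+i} = B_{cn}\, B_i$ for all $n\geq 0$ and $0\leq i< c$; hence $B = \sum_{i=0}^{c-1} B^{(c)} B_i$, and as each $B_i$ is a finite $B_0$-module this exhibits $B$ as a module-finite extension of $B^{(c)}$. Applying the same reasoning to $B^{(c)}$ and $M$ shows $M$, and therefore each of its graded Veronese pieces, is a finite $B^{(c)}$-module. Because a module-finite extension of rings has the same Krull dimension, restriction of scalars preserves the dimension of a module, so $\dim M = \dim_B M = \dim_{B^{(c)}} M$; from now on $M^{(c)}$ may be regarded as a finite $B^{(c)}$-module.

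Next, decompose $M = \bigoplus_{i=0}^{c-1} M^{[i]}$ as a direct sum of $B^{(c)}$-modules, where $M^{[i]} = \bigoplus_{n\geq 0} M_{cn+i}$; since $M_0 = 0$ we have $M^{[0]} = M^{(c)}$, so $\dim_{B^{(c)}} M = \max_{0\leq i\leq c-1} \dim M^{[i]}$ and it suffices to bound $\dim M^{[i]} \leq \max\{\dim M^{(c)}, 0\}$ for $1\leq i\leq c-1$. Fix such an $i$. The piece $N^{[i]} := \bigoplus_{n\geq 1} M_{cn+i}$ is a $B^{(c)}$-submodule of $M^{[i]}$ whose quotient is $M_i$, a module of finite length; hence $\dim M^{[i]} \leq \max\{\dim N^{[i]}, 0\}$. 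This is the point at which the hypothesis $c\geq r$ enters: for every $n\geq 1$ we have $cn\geq c\geq r$, so $M_{cn+i} = B_i M_{cn}$, and multiplication assembles into a surjection of $B^{(c)}$-modules $B_i \otimes_{B_0} M^{(c)} \twoheadrightarrow N^{[i]}$. Since $\ann_{B^{(c)}}(M^{(c)})$ annihilates $B_i \otimes_{B_0} M^{(c)}$, we get $\dim N^{[i]} \leq \dim(B_i \otimes_{B_0} M^{(c)}) \leq \dim M^{(c)}$, and assembling these inequalities finishes the argument.

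The only delicate point — really a bookkeeping matter rather than a genuine obstacle — is the handling of low degrees: $M$ is generated by a single graded component only eventually, so the relation $M_{cn+i} = B_i M_{cn}$ is available for all the relevant $n$ precisely because $c$, and hence each $cn$ with $n\geq 1$, is at least $r$; this is exactly what isolates the finite-length term $M_i$ in each residue-$i$ slice and what forces the hypothesis $c\geq r$. One should also check the degenerate cases where $M^{(c)} = 0$ (then each $N^{[i]} = 0$ as well) or where $M$ itself has finite length, which are exactly the situations in which the ``$0$'' inside the maximum does real work. An alternative route I would keep in reserve is to first reduce to $B_0$ Artinian — each $M_n$ has finite length and $M$ is generated in finitely many degrees, so $\mathfrak{m}_0^T M = 0$ for some $T$ — and then simply compare the degrees of the Hilbert polynomials of $M$ and of $M^{(c)}$; but the Veronese/finiteness argument above seems cleaner and more self-contained.
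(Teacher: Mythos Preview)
Your argument is correct, and it takes a genuinely different route from the paper's. The paper truncates to $N = M_{\geq c}$, passes to $B/\ann N$ so that $N$ becomes faithful and generated in degree $c$, then uses the diagonal embedding $B \hookrightarrow N^l(+c)$ coming from a set of degree-$c$ generators; taking $c$-th Veronese turns this into $B^{(c)} \hookrightarrow (N^{(c)})^l(+1)$, whence $\dim M^{(c)} = \dim N^{(c)} \geq \dim B^{(c)} = \dim B = \dim N = \dim M$. Your approach instead uses module-finiteness of $B$ over $B^{(c)}$ to compute $\dim M$ over $B^{(c)}$, decomposes $M$ into its residue-class pieces $M^{[i]}$, and for each $i>0$ produces a surjection $B_i \otimes_{B_0} M^{(c)} \twoheadrightarrow N^{[i]}$ from the relation $M_{cn+i} = B_i M_{cn}$ (valid since $cn \geq r$). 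The paper's proof is a short ``faithfulness plus embedding'' trick, while yours is a more transparent structural decomposition that makes explicit where the finite-length correction $M_i$ sits and why the $\max\{-,0\}$ is needed; both hinge on the same input $c \geq r$, but your argument avoids the passage modulo the annihilator and would adapt more readily to variants (e.g.\ comparing several Veronese pieces at once).
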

\begin{proof}
  We may assume $\dim M  > 0$. Set $N = M_{\geq c}$. Note $\dim N = \dim M$. After going mod annhilator of $N$ we may assume that $N$ is a faithful $B$-module. It is also generated in degree $c$.
  Let $N = (n_1, \ldots, n_l)$ where $n_i \in N_c$. Note we have an exact sequence
  \[
  0 \rt B \xrightarrow{\phi} N^l(+c)
  \]
  where $\phi(b) = (bn_1, \ldots, bn_l)$. Taking Veronese we have an exact sequence
  \[
  0 \rt B^{(c)} \xrightarrow{\phi^{<c>}} (N^{<c>})^l(+1)
  \]
  So it follows that $\dim N^{<c>} = \dim B^{<c>}$. Note $\dim B = \dim B^{<c>}$ and as $N$ is a faithful $B$-module we have $\dim N = \dim B$.
  Thus $\dim N^{<c>} = \dim M$. As $M_0 = 0$ note that $M^{<c>} = N^{<c>}$.
\end{proof}
We will also need the following result.
\begin{lemma}\label{first-dim}
Let $(A,\m)$ be a quasi-unmixed local ring of dimension $d > 0$ with infinite residue field and $I$ an $\m$-primary ideal. Set $J = I_1$ the first coefficient ideal of $I$. Set $\Sc = A[It]$ and
$\T = A[Jt]$. Consider the finite $\Sc$-module $\T/\Sc$. Then $\dim \T/\Sc \leq d - 1$.
\end{lemma}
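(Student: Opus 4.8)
The plan is to read off $\dim(\T/\Sc)$ from the Hilbert function of $\T/\Sc$ as a graded $\Sc$-module. First I would check that $\T/\Sc$ really is a finite $\Sc$-module. By \ref{coeff} the first coefficient ideal satisfies $I\subseteq J=I_1\subseteq I_0=\ov{I}$, so $J$ is integral over $I$; hence if $x\in J$ with $x^k+a_1x^{k-1}+\cdots+a_k=0$, $a_j\in I^j$, then $(xt)^k+(a_1t)(xt)^{k-1}+\cdots+a_kt^k=0$ shows that $xt$ is integral over $\Sc$. Thus $\T=\Sc[Jt]$ is integral over $\Sc$ and of finite type as a $\Sc$-algebra, so it is module-finite over $\Sc$. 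Therefore $M:=\T/\Sc=\bigoplus_{n\geq 1}(J^n/I^n)t^n$ is a finitely generated graded $\Sc$-module, and since $I^n$ and $J^n$ are $\m$-primary, every graded piece $M_n=J^n/I^n$ has finite length over $A$, with $\ell(M_n)=\ell(A/I^n)-\ell(A/J^n)$.

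Next I would compute this length for $n\gg 0$. Since $\ell(A/I^n)=P_I(n-1)$ and $\ell(A/J^n)=P_J(n-1)$ for large $n$, we have $\ell(M_n)=P_I(n-1)-P_J(n-1)$. Writing $P_I$ and $P_J$ in the binomial form of \ref{coeff} and using that $J=I_1$ forces $e_0(I)=e_0(J)$ and $e_1(I)=e_1(J)$, the two top-degree terms cancel and
\[
P_I(m)-P_J(m)=\sum_{i=2}^{d}(-1)^i\,(e_i(I)-e_i(J))\binom{m+d-i}{d-i},
\]
which is a polynomial in $m$ of degree at most $d-2$ (the zero polynomial if $d=1$). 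Hence the Hilbert function $n\mapsto\ell(M_n)$ agrees for $n\gg 0$ with a polynomial of degree $\leq d-2$.

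Finally I would invoke the standard correspondence between dimension and the degree of the Hilbert polynomial in this setting: if $N$ is a finitely generated graded $\Sc$-module with $\ell(N_n)<\infty$ for all $n$, then $\operatorname{Supp}_{\Sc}N\subseteq V(\m\Sc)$, so $\m^k\Sc$ annihilates $N$ for some $k$ and $N$ becomes a finitely generated graded module over the standard graded algebra $\Sc/\m^k\Sc$ over the Artinian ring $A/\m^k$; consequently $\ell(N_n)$ is eventually a polynomial of degree $\dim N-1$. Applied to $N=M$ this gives $\dim(\T/\Sc)-1\leq d-2$, i.e. $\dim(\T/\Sc)\leq d-1$, as claimed.

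As for difficulty, none of the steps is deep. The one point that needs attention is the cancellation of the two leading terms of $P_I-P_J$, which is precisely where the defining property of the first coefficient ideal (the matching of $e_0$ and $e_1$) enters, together with the routine but necessary observation that $\T$ is module-finite over $\Sc$ so that $\T/\Sc$ genuinely has a Hilbert polynomial and a well-defined dimension.
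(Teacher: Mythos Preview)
Your proof is correct and follows essentially the same route as the paper's: the paper's argument is the two-line observation that $e_0(I)=e_0(J)$ and $e_1(I)=e_1(J)$ force $n\mapsto\ell(J^n/I^n)$ to be of polynomial type of degree $\leq d-2$, whence the dimension bound. You have simply unpacked the details the paper leaves implicit---the module-finiteness of $\T$ over $\Sc$, the explicit cancellation in $P_I-P_J$, and the passage from Hilbert-polynomial degree to Krull dimension via the Artinian base $\Sc/\m^k\Sc$.
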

\begin{proof}
As $e_0(I) = e_0(J) $ and $e_1(I) = e_1(J)$ the function $n \mapsto \ell(J^n/I^n)$ is of polynomial type of degree $\leq d-2$. The result follows.
\end{proof}
\section{Main technical results}
In this section we prove two technical results that we need. The first technical result is :
\begin{theorem}
\label{tech} Let $(A,\m)$ be a quasi-unmixed, analytically unramified, local, $S_2$-domain of dimension $d \geq 2$. Assume the residue field of $A$ is infinite. Let $I$ be an $\m$-primary ideal. Let  $\C = \bigoplus_{n \geq 1}(I^n)^*/(I^n)_1$ considered as an $\Sc = A[It]$-module.
Then either $\dim \C = d$ or $\C = 0$.
\end{theorem}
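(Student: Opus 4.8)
The plan is to realize $\C$ as an honest quotient $\overline{\Sc}/\D$ of two module-finite extension rings of $\Sc$ lying inside $A[t]$, and then to run a short depth computation that forces every associated prime of $\C$ to have height $1$ as soon as $\C\neq 0$.

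First I would set up the two rings. Put $\overline{\Sc}=\bigoplus_{n\ge 0}(I^n)^{*}$ with $(I^0)^{*}=A$, and $\D=\bigoplus_{n\ge 0}(I^n)_1$ with $(I^0)_1=A$. Then $\overline{\Sc}\subseteq A[t]$ is a ring (it is the integral closure of $\Sc$ in $A[t]$), it is a domain, and it is module-finite over $\Sc$ because $A$ is analytically unramified. By Theorem \ref{cc} the $\Z$-graded ring $\bigoplus_{n\in\Z}I_n$ is the $S_2$-ification of $\R$; its non-negatively graded part is exactly $\D$, and one checks that $\D$ is module-finite over $\Sc=A[It]$ and still satisfies $S_2$. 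Since $(I^n)_1\subseteq (I^n)^{*}$ for every $n$ (the first coefficient ideal is contained in the integral closure), we have graded inclusions $\Sc\subseteq\D\subseteq\overline{\Sc}$, all with the same quotient field $Q(A)(t)$; because $\D$ and $\overline{\Sc}$ agree in degree $0$ this yields an identification of graded $\Sc$-modules $\C=\overline{\Sc}/\D$.

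The key step will be to show that no prime of $\D$ of height $\ge 2$ lies in $\Ass_{\D}\C$. Let $P$ be a prime of $\D$ with $\height P\ge 2$. Since $\D$ satisfies $S_2$ and $\dim\D_P=\height P\ge 2$, we have $\depth\D_P\ge 2$; since $\overline{\Sc}$ is a domain and $P\neq 0$, we have $\depth_{\D_P}\overline{\Sc}_P\ge 1$. Localizing $0\to\D\to\overline{\Sc}\to\C\to 0$ at $P$ and applying the standard depth estimate for a short exact sequence gives
\[
\depth\C_P\ \ge\ \min\{\depth\overline{\Sc}_P,\ \depth\D_P-1\}\ \ge\ 1,
\]
so $P\notin\Ass_{\D}\C$.

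To conclude, if $\C=0$ there is nothing to prove, so assume $\C\neq 0$. As $\D$ and $\overline{\Sc}$ share a quotient field, $\C$ is a torsion $\D$-module, so $(0)\notin\Ass_{\D}\C$; together with the key step this shows every associated prime of $\C$ has height exactly $1$. Since $\D$ is module-finite over $\Sc=A[It]$, it is an equidimensional catenary domain of dimension $d+1$, hence $\dim_{\D}\C=d$, and therefore $\dim\C=\dim_{\D}\C=d$ because $\Sc\subseteq\D$ is module-finite. The one point that genuinely needs care is the claim that $\D=\bigoplus_{n\ge 0}(I^n)_1$ is module-finite over $\Sc$ and satisfies $S_2$: Theorem \ref{cc} is phrased for the extended Rees algebra $\R$, so this requires passing to the non-negatively graded part $\bigoplus_{n\in\Z}I_n=\D[t^{-1}]$ and checking that truncation to non-negative degrees preserves both module-finiteness over $\Sc$ and the $S_2$ condition. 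The depth chase and the dimension count are then routine.
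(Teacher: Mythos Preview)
Your argument is correct and reaches the same conclusion, but it follows a genuinely different route from the paper's. The paper does not use that $\D=\bigoplus_{n\ge 0}(I^n)_1$ (its $\U$) is $S_2$; instead it embeds $\C$ as a submodule of $L=\bigoplus_{n\ge 1}A/(I^n)_1$, invokes the lemma $\Ass_{\U}L=\Ass_{\U}G(\F)$, and then observes that $G(\F)=\T/(t^{-1})$ is $S_1$ and equidimensional because $\T$ is a catenary $S_2$-domain. Thus every minimal prime of $\C$ is an associated prime of the equidimensional $S_1$ ring $G(\F)$, hence has coheight $d$. Your approach replaces this detour through $L$ and $G(\F)$ by a direct depth chase on $0\to\D\to\overline{\Sc}\to\C\to 0$, which is cleaner once you know $\D$ is $S_2$ as a ring.

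The point you flag is exactly the one that needs work: Theorem~\ref{cc} gives $\T=\bigoplus_{n\in\Z}I_n$ as the $S_2$-ification of $\R$, and you must pass from this to ``$\D$ is $S_2$ as a ring''. This is true but not automatic. One way: for graded primes $P\not\supseteq\D_+$ one has $\D_P\cong\T_Q$ (since inverting any $xt\in\D_+$ already inverts $t^{-1}$), while for $P=\m_0\oplus\D_+$ with $\m_0\neq\m$ one localizes to get $\D_P\cong A_{\m_0}[t]_{(\m_0,t)}$; the remaining case $P=\M$ then follows. Alternatively, Ciuperc\u a's paper in fact identifies $\D$ as the $S_2$-ification of $\Sc$ itself, and the passage from ``$S_2$ as an $\Sc$-module'' to ``$S_2$ as a ring'' uses that $\Sc\subseteq\D$ is a module-finite extension of equidimensional catenary domains of the same dimension. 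Either way your proof goes through; the paper's route simply sidesteps this verification by using $G(\F)$ in place of $\D$.
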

\s\label{techs}
 (with hypotheses as in \ref{tech}) Set $\T = \bigoplus_{n \in \Z}I_n$ where $I_n = A$ for $n \leq 0$ and $I_n = (I^n)_1$. Then $\T$ is the $S_2$-fication of the extended Rees algebra $\R = A[It, t^{-1}]$, \cite[2.4, 2.5. 2.6]{C}. In particular $\F = \{ (I^n)_1\}_{n \geq 0}$ is a multiplicative  $I$-stable filtration of $A$ (we set $(I^0)_1 = A_1 = A$). Let $\U = \bigoplus_{n \geq 0}(I^n)_1$.

Set $L = \bigoplus_{n \geq 1}A/(I^n)_1$. We have a following exact sequence
\[
0 \rt \U \rt A[t] \rt L \rt 0.
\]
This gives $L$ the structure of an $\U$-module. We note that $L$ is not finitely generated as an $U$-module.

Let $G = G(\F) = \bigoplus_{n \geq 0}(I^n)_1/(I^{n +1})_1$ be the associated graded ring of the filtration $\F$. The following result was proved for the $I$-adic filtration in \cite[5.6]{P}. Its extension to arbitrary multiplicative $I$-stable filtration's is routine.
\begin{lemma}
\label{ass} We have
$$ \Ass_\U G = \Ass_\U L.$$
\end{lemma}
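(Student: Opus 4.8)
The plan is to realise inside $L$ an exhaustive increasing filtration by graded $\U$-submodules, all of whose consecutive subquotients are shifts of $G$; both inclusions of the claimed equality then fall out by tracking associated primes along this filtration. Concretely, write $I_n = A$ for $n \le 0$ and $I_n = (I^n)_1$ for $n \ge 1$, so that $\U = \bigoplus_{n\ge 0} I_n t^n$, $G = \bigoplus_{n\ge 0} I_n/I_{n+1}$ and $L = \bigoplus_{n\ge 1} A/I_n$, and for each integer $j \ge -1$ put
\[
L^{[j]} \ = \ \bigoplus_{n} \bigl(I_{n-1-j}/I_n\bigr)\,t^n \ \subseteq \ L ,
\]
i.e.\ in degree $n$ the image of $I_{n-1-j}$ in $A/I_n = L_n$. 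I would first record the easy facts: $L^{[-1]} = 0$; the $L^{[j]}$ increase with $j$ (because $\F$ is a decreasing filtration); $\bigcup_j L^{[j]} = L$ (since $I_{n-1-j} = A$ as soon as $j \ge n-1$); and, crucially, each $L^{[j]}$ is a $\U$-submodule of $L$, which is exactly the content of the multiplicativity inclusions $I_k\, I_{n-1-j} \subseteq I_{k+n-1-j}$ together with the description of the $\U$-module structure of $L = A[t]/\U$ coming from $0 \to \U \to A[t] \to L \to 0$.

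Next I would identify the subquotients. For $j \ge 0$ the degree-$n$ component of $L^{[j]}/L^{[j-1]}$ is $(I_{n-1-j}/I_n)/(I_{n-j}/I_n) = I_{n-1-j}/I_{n-j} = G_{n-1-j}$, and the $\U$-action carries over verbatim, so $L^{[j]}/L^{[j-1]} \cong G(-j-1)$ as graded $\U$-modules; in particular $L^{[0]} \cong G(-1)$. From here the proof is purely formal. Since $L^{[0]}\cong G(-1)$ is a $\U$-submodule of $L$, we get $\Ass_\U G = \Ass_\U G(-1) = \Ass_\U L^{[0]} \subseteq \Ass_\U L$. Conversely, for each $j$ the chain $0 = L^{[-1]} \subseteq L^{[0]} \subseteq \cdots \subseteq L^{[j]}$ is a \emph{finite} filtration whose quotients are $G(-1),\dots,G(-j-1)$, so $\Ass_\U L^{[j]} \subseteq \bigcup_{i=1}^{j+1}\Ass_\U G(-i) = \Ass_\U G$; and since $L = \bigcup_j L^{[j]}$ is an increasing union of submodules, $\Ass_\U L = \bigcup_j \Ass_\U L^{[j]} \subseteq \Ass_\U G$. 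Hence $\Ass_\U G = \Ass_\U L$.

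The proof is short; the only point that genuinely needs care is the bookkeeping of the indices in $\{L^{[j]}\}$. The naive short exact sequence $0 \to G \to L(1) \to L \to 0$ (again obtained from $0 \to \U \to A[t] \to L \to 0$) by itself yields only $\Ass_\U G \subseteq \Ass_\U L$; it is the reverse inclusion that forces one to repackage it as an \emph{exhaustive} filtration of $L$ with \emph{finite} stages rather than as a single extension. Everything else — that each $L^{[j]}$ is a $\U$-submodule and that the subquotient isomorphism $L^{[j]}/L^{[j-1]} \cong G(-j-1)$ is $\U$-linear — uses only that $\F$ is a multiplicative (hence in particular decreasing) filtration with $I_0 = A$, which is precisely why the argument of \cite[5.6]{P} for the $I$-adic filtration carries over to $\F$ without change.
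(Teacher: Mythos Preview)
Your proof is correct. The filtration $\{L^{[j]}\}$ is well defined, the verification that each $L^{[j]}$ is a $\U$-submodule uses only the multiplicativity $I_k I_m \subseteq I_{k+m}$, the identification $L^{[j]}/L^{[j-1]} \cong G(-j-1)$ is accurate (the components in degrees $n \le j$ vanish because $I_{n-1-j} = I_{n-j} = A$ there), and the two inclusions of associated primes follow exactly as you indicate. The one step a reader might pause at, that $\Ass_\U L = \bigcup_j \Ass_\U L^{[j]}$ for an increasing union even though $L$ is not finitely generated, is immediate since any $x \in L$ with $P = \ann_\U(x)$ already lies in some $L^{[j]}$.

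As for comparison with the paper: the paper does not actually prove this lemma. It simply records that the result was established for the $I$-adic filtration in \cite[5.6]{P} and asserts that the extension to an arbitrary multiplicative $I$-stable filtration is routine. Your write-up is therefore not an alternative to the paper's argument but rather an explicit execution of the ``routine'' step the paper omits; the filtration-by-shifts-of-$G$ argument you give is exactly the standard one and is almost certainly what \cite[5.6]{P} does in the $I$-adic case.
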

We now give
\begin{proof}[Proof of Theorem \ref{tech}]
We have an inclusion of rings $\Sc \subseteq \U \subseteq \Sc^*$. As $\Sc^*$ is a finite $\Sc$-module it follows that $\Sc^*$ is a finite $\U$-module.
So $\C = \Sc^*/\U$ is a finite $\U$-module.

 We note that $\T$ is a catenary $S_2$-domain.
It follows that $G = \T/(t^{-1})$ is $S_1$ and equi-dimensional, see \cite[Lemma 2, p.\ 250]{Ma} for the local case (same proof works in the $*$-local case).

We note that $\C$ is an $\U$-submodue of $L$. Suppose $\C \neq 0$.
Let $P $ be a minimal prime of $C$. Note
$P \in \Ass_\U L = \Ass_\U G$ (see \ref{ass}).  As $G$ is $S_1$ it is in particular unmixed. Also it is equi-dimensional. It follows that $\dim \U/P = d$. The result follows.
\end{proof}
The second technical result that we need is:
\begin{proposition}
\label{dim-first}(with hypotheses as in \ref{tech}) Let $\D = \bigoplus_{n \geq 1}(I^n)_1/I^n$. Then $\D$ is a finite $\Sc$-module of dimension $\leq d -1$.
\end{proposition}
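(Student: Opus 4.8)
The plan is to realize $\D$ as $\U/\Sc$, where $\U=\bigoplus_{n\ge 0}(I^n)_1$. By \ref{techs} the filtration $\F=\{(I^n)_1\}$ is multiplicative and $I$-stable, so $\U$ is a ring containing $\Sc$; and since $(I^n)_1\subseteq (I^n)^*$ for every $n$, $\U$ is contained in the integral closure $\Sc^*$ of $\Sc$. As $\Sc^*$ is module-finite over $\Sc$, so is $\U$, whence $\D=\U/\Sc$ is a finite $\Sc$-module; this is the first assertion. It also puts us in the situation of \ref{ver}: $\ell(\D_n)=\ell((I^n)_1/I^n)<\infty$ for all $n$, and $\D_{\ge m}$ is generated by $\D_m$ for $m\gg 0$, so Lemma \ref{v-lem} applies to $M=\D$ over $B=\Sc$.

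For the dimension bound I would pass to a high Veronese. Choose $c$ large enough that $(I^{m+1})_1=I\,(I^m)_1$ for all $m\ge c$ (possible since $\F$ is $I$-stable) and also $c\ge r$, the bound of \ref{ver}. Iterating the stability relation gives $(I^{cn})_1=I^{c(n-1)}(I^c)_1$ for every $n\ge 1$. Now $I^c\subseteq (I^c)_1$, so $I^{c(n-1)}(I^c)_1\subseteq\bigl((I^c)_1\bigr)^{n}$; conversely, multiplicativity of $\F$ gives $\bigl((I^c)_1\bigr)^{n}\subseteq (I^{cn})_1$. Hence
\[
(I^{cn})_1=\bigl((I^c)_1\bigr)^{n}\qquad\text{for all }n\ge 1.
\]
Writing $K=I^c$ and $\mathfrak b=(I^c)_1=K_1$, this identifies the Veronese submodule $\D^{(c)}=\bigoplus_{n\ge 1}(I^{cn})_1/I^{cn}$ with $A[\mathfrak b t]/A[Kt]$ as a module over $\Sc^{(c)}\cong A[Kt]$. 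Applying Lemma \ref{first-dim} to the $\m$-primary ideal $K$ (whose first coefficient ideal is precisely $\mathfrak b$) gives $\dim \D^{(c)}=\dim\bigl(A[\mathfrak b t]/A[Kt]\bigr)\le d-1$. Lemma \ref{v-lem} then yields $\dim\D\le\max\{\dim\D^{(c)},0\}=d-1$, since $d\ge 2$.

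The step I expect to require the most care is the equality $(I^{cn})_1=\bigl((I^c)_1\bigr)^{n}$ for $c\gg 0$. It hinges on combining the $I$-stability of $\F$ with the multiplicativity $(I^a)_1\,(I^b)_1\subseteq (I^{a+b})_1$, which is not evident from the definition of coefficient ideals and is available only because $\bigoplus_{n\in\Z}I_n$ is the $S_2$-ification of the extended Rees algebra, cf.\ \cite{C}. Everything after that is bookkeeping with Veronese subrings together with Lemmas \ref{first-dim} and \ref{v-lem}.
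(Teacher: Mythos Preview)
Your argument is correct and follows the same strategy as the paper: realize $\D=\U/\Sc$, pass to a suitable Veronese, identify $\D^{(c)}$ with $A[J t]/A[I^c t]$ for $J=(I^c)_1$, and then invoke Lemmas \ref{first-dim} and \ref{v-lem}. The only difference is cosmetic: you derive $(I^{cn})_1=\bigl((I^c)_1\bigr)^n$ by hand from $I$-stability and multiplicativity of $\F$, whereas the paper simply quotes that some Veronese $\U^{(l)}$ of the finitely generated $A$-algebra $\U$ is standard graded (and then takes $c=rl$), which encodes exactly the same equality.
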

\begin{proof}
  We have an inclusion of rings $\Sc \subseteq \U \subseteq \Sc^*$. As $\Sc^*$ is a finite $\Sc$-module it follows that $\U$ is a finite $\Sc$-module. So $\D = \U/\Sc$ is a finite $\Sc$-module.

  We may assume that for $m \geq r$ the $\Sc$-module $\D_{\geq m}$ is generated in degree $m$. Note some Veronese of $\U$ (say $l$) is standard graded.   Set $c = rl$
  and $J = (I^{c})_1$. Then $D^{<c>} = A[Jt]/A[I^{c}t]$. By \ref{first-dim} we get $\dim D^{<c>} \leq d -1$. The result follows from  \ref{v-lem}.
\end{proof}
\section{Proof of Theorem \ref{main}}
In this section $(A,\m)$ is an excellent normal local ring of dimension $d \geq 2$ and $I$ is an $\m$-primary ideal. Throughout $\R = A[It, t^{-1}]$ is the extended Rees algebra of $I$ and $\Sc = A[It]$ is the Rees algebra of $I$. Set $\R^* = \bigoplus_{n \in \Z} (I^n)^*$ and $\Sc^* = \bigoplus_{n \geq 0}(I^n)^*$. Note $\R^*$ is a finite $\R$-module and they have the same field of fractions.
Similarly $\Sc^*$ is a finite $\Sc$-module and they have the same field of fractions. Furthermore both $\R^*$ and $\Sc^*$ are normal domains.
\s \label{b} Set $\B = \bigoplus_{n \geq 1}(I^n)^*/I^n.$ Then the following short exact sequences give $\B$ a structure both as an $\R$-module and as a $\Sc$-module.
\[
0 \rt \R \rt \R^* \rt \B \rt 0.
\]
\[
0 \rt \Sc \rt \Sc^* \rt \B \rt 0.
\]
As $\ell(\B_n)$ is finite for all $n  \geq 1$ and so  $\dim \B$ is determined by its Hilbert function. It follows that $\B$ has the same dimension both as an $\R$-module and as an $\Sc$-module.

The following result covers Theorem \ref{main}.
\begin{theorem}
  \label{main-body} Let $(A,\m)$ be an excellent normal local ring of dimension $d \geq 1$ and let $I$ be an $\m$-primary ideal. The following assertions are equivalent.
  \begin{enumerate}[\rm (i)]
    \item $\R$ is $R_1$.
    \item $\dim \B \leq d-1$ as an $\R$-module.
    \item $\Sc$ is $R_1$.
    \item $\dim \B \leq d-1$ as an $\Sc$-module.
    \item $\Proj \Sc$ is $R_1$.
    \item $(I^n)^* = (I^n)_1$ for all $n \geq 1$.
  \end{enumerate}
\end{theorem}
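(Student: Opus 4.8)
The plan is to prove the cycle (i)$\Leftrightarrow$(ii)$\Leftrightarrow$(iv)$\Leftrightarrow$(vi) together with (iii)$\Leftrightarrow$(iv) and (iii)$\Leftrightarrow$(v). If $d=1$ then $A$ is a DVR, $I$ is a power of $\m$, every $I^n$ is integrally closed, and all six statements hold; and since replacing $A$ by $A(X)=A[X]_{\m A[X]}$ affects none of (i)--(vi), I may assume $d\geq 2$ and $A/\m$ infinite, so that Theorem~\ref{tech} and Proposition~\ref{dim-first} apply to $I$. I will also use that, $A$ being excellent and normal hence quasi-unmixed, the rings $\R$ and $\Sc$ are catenary and equidimensional of dimension $d+1$ (using that $A$ is excellent, cf.\ the facts invoked around \ref{cc}); consequently every homogeneous ideal $\mathfrak a$ of $\R$ or of $\Sc$ that contains a power of $\m$ satisfies $\height \mathfrak a=(d+1)-\dim R/\mathfrak a$, since such an $\mathfrak a$ lies inside the unique homogeneous maximal ideal over $\m$, at which the ring is quasi-unmixed.

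The core step is the general fact: if $R$ is a Noetherian domain whose normalisation $\ov R$ is module-finite, then $R$ is $R_1$ if and only if $\dim(\ov R/R)\leq \dim R-2$. Indeed, for a height-one prime $P$, $R_P$ is regular $\iff$ $R_P$ is a DVR $\iff$ $R_P$ is normal $\iff$ $(\ov R/R)_P=0$, so $R$ is $R_1$ exactly when no height-one prime contains the conductor $\mathfrak a=\ann_R(\ov R/R)$. I apply this with $R=\R$ and $\ov R=\R^{*}=\bigoplus_n (I^n)^{*}$ (the normalisation of $\R$, as $A$ is normal), so that $\ov R/R=\B$: here $\mathfrak a=\ann_\R\B$ is homogeneous and contains a power of $\m$ (each $\B_n$ has finite length), so by the height formula above $\R$ is $R_1$ $\iff$ $\dim\R/\mathfrak a=\dim\B\leq d-1$, which is (i)$\Leftrightarrow$(ii). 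The identical computation with $R=\Sc$, $\ov R=\Sc^{*}$, $\ov R/R=\B$ gives (iii)$\Leftrightarrow$(iv); and (ii)$\Leftrightarrow$(iv) is the remark in \ref{b} that $\dim\B$ is the same whether $\B$ is viewed over $\R$ or over $\Sc$. This first step — making sure $\R$ and $\Sc$ genuinely obey the height formula at the conductor, so that ``regular in codimension one'' faithfully translates into the dimension bound on $\B$ — is the one place where genuine care, and the excellence of $A$, are needed; the remaining equivalences are routine.

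For (iv)$\Leftrightarrow$(vi): since $I^n\sub (I^n)_1\sub (I^n)^{*}$ for every $n$ (the first coefficient ideal lies between $I^n$ and the zeroth coefficient ideal $(I^n)^{*}$), there is a short exact sequence of finite graded $\Sc$-modules $0\rt \D\rt \B\rt \C\rt 0$, so $\dim\B=\max\{\dim\D,\dim\C\}$. If (vi) holds then $\C=0$, whence $\dim\B=\dim\D\leq d-1$ by Proposition~\ref{dim-first}; if (vi) fails then $\C\neq 0$, whence $\dim\C=d$ by Theorem~\ref{tech} and so $\dim\B\geq d$. Thus (iv)$\Leftrightarrow$(vi), and together with the previous paragraph all of (i), (ii), (iii), (iv), (vi) are equivalent.

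It remains to prove (iii)$\Leftrightarrow$(v). For the graded domain $\Sc$, $R_1$ may be checked on homogeneous primes of height $\leq 1$ (as is standard for graded rings); these are the prime $\Sc_+$ together with the codimension-$\leq 1$ points of $\Proj\Sc$ (at such a point $P$ one has $\Sc_P$ regular iff $\mathcal{O}_{\Proj\Sc,P}$ regular). Hence $\Sc$ is $R_1$ if and only if $\Sc_{\Sc_+}$ is regular and $\Proj\Sc$ is $R_1$. But $\Sc_{\Sc_+}$ is always regular: $\B$ is killed by a power of $\m$, so its support lies in $V(\m\Sc)$, whereas $\m\Sc\not\sub\Sc_+$ (they already differ in degree $0$); hence $\B_{\Sc_+}=0$, so $\Sc_{\Sc_+}$ coincides with a localisation of the normal domain $\Sc^{*}$, and being one-dimensional (as $\Sc/\Sc_+\cong A$ and $\dim\Sc=d+1$) it is a DVR. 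This gives (iii)$\Leftrightarrow$(v) and completes the cycle.
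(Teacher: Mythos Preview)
Your proof is correct and follows essentially the same route as the paper's: both characterize $R_1$ for $\R$ and $\Sc$ via the bound $\dim\B\le d-1$ (using that $\B_P=0$ at a height-one prime $P$ iff the localization is a DVR), and both link (iv) to (vi) via Theorem~\ref{tech} and Proposition~\ref{dim-first}. The only cosmetic differences are that you handle $d=1$ and the infinite-residue-field reduction explicitly, you spell out the catenary/equidimensional input behind the height formula (which the paper's ``the result follows'' after $\B_P=0$ leaves implicit), and for (v) you prove (iii)$\Leftrightarrow$(v) directly by checking that $\Sc_{\Sc_+}$ is always regular, whereas the paper goes (v)$\Rightarrow$(iv) by showing minimal primes of $\B$ avoid $\Sc_+$ --- both arguments resting on the same observation that $\B$ is $\m$-torsion.
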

\begin{proof}
$\rm{(i)} \implies \rm{(ii)}$: Let $P$ be a height one prime of $\R$.
We have an exact sequence $0 \rt \R_P \rt (\R^*)_P \rt \B_P \rt 0$.
By our hypothesis $\R_P$ is a DVR. We also have that $(R^*)_P$ is a finite $\R_P$-module and it has same field of fractions as that of $\R_P$. It follows that $\R_P = (\R^*)_P$. So $\B_P = 0$.
The result follows.

$\rm{(ii)} \implies \rm{(i)}$: Let $P$ be a prime of height one in $\R$. Then $\B_P = 0$. It follows that  $\R_P = (\R^*)_P$. So $\R_P$ is a DVR. It follows that $\R$ is $R_1$.

$\rm{(ii)} \Leftrightarrow \rm{(iv)}$: This follows from \ref{b}.

$\rm{(iii)} \Leftrightarrow \rm{(iv)}$: This is similar to $\rm{(i)} \Leftrightarrow \rm{(ii)}$.

$\rm{(iii)} \implies \rm{(v)}$: This is trivial.

$\rm{(v)} \implies \rm{(iv)}$: We may assume $\dim \B > 0$, otherwise there is nothing to prove. Let $P$ be a minimal prime of $\B$. By assumption $P \neq \M$, the maximal homogeneous ideal of $\Sc$.  We claim that $P \nsupseteq \Sc_+$. Suppose if possible $P \supseteq \Sc_+$. As $\B$ is a finite $\Sc$-module and as $\ell(\B_n)$ is finite for all $n$ there exists $r$ such that $\m^r \B = 0$. So $P $ contains
$(\m^r  \oplus \Sc_+)$. It follows that $P = \M$, a contradiction. Rest of the argument follows as in $\rm{(i)} \implies \rm{(ii)}$.

$\rm{(iv)} \implies \rm{(vi)}$: Let  $\C = \bigoplus_{n \geq 1}(I^n)^*/(I^n)_1$ considered as an $\Sc$-module. Then $\C$ is a quotient of $\B$. So $\dim \C \leq d-1$. By \ref{tech}
we get $\C = 0$. So $(I^n)^* = (I^n)_1$ for all $n \geq 1$.

$\rm{(vi)} \implies \rm{(iv)}$: By our assumption $\B = \bigoplus_{n \geq 1}(I^n)_1/I^n.$
By \ref{dim-first}, it follows that $\dim \B \leq d -1$.
\end{proof}

\end{document}